\newtheorem{theo}{Theorem}[section]
\def\remark#1{{\refstepcounter{theo}\label{#1}\noindent\sc Remark  
\arabic{section}.\arabic{theo} - }}
\def\example#1{{\refstepcounter{theo}\label{#1}\noindent\sc Example 
\arabic{section}.\arabic{theo} - }}
\def\equat{\refstepcounter{theo}$$~}
\def\endequat{\leqno{\boldsymbol{(\arabic{section}.\arabic{theo})}}~$$}
    \def\CM{{\mathbb{C}}}
  \def\dG{{\mathfrak d}}  
\def\FG{{\mathfrak F}}    
  \def\hG{{\mathfrak h}}
  \def\lG{{\mathfrak l}}  
    \def\NM{{\mathbb{N}}}
  \def\pG{{\mathfrak p}}  
    \def\QM{{\mathbb{Q}}}
    \def\RM{{\mathbb{R}}}
  \def\sG{{\mathfrak s}}
    \def\ZM{{\mathbb{Z}}}
  \def\ab{{\mathbf a}}  
    \def\BC{{\mathcal{B}}}
    \def\CC{{\mathcal{C}}}
  \def\db{{\mathbf d}}  \def\DC{{\mathcal{D}}}
    \def\FC{{\mathcal{F}}}
    \def\HC{{\mathcal{H}}}
    \def\LC{{\mathcal{L}}}
    \def\MC{{\mathcal{M}}}
    \def\NC{{\mathcal{N}}}
    \def\OC{{\mathcal{O}}}
    \def\PC{{\mathcal{P}}}
  \def\qb{{\mathbf q}}  
    \def\RC{{\mathcal{R}}}
  \def\sb{{\mathbf s}}  \def\SC{{\mathcal{S}}}
    \def\TC{{\mathcal{T}}}
  \def\crm{{\mathrm{c}}}
          \def\mba{{\bar{m}}}
          \def\sba{{\bar{s}}}
          \def\fov{{\overline{f}}}
          \def\hov{{\overline{h}}}
          \def\uov{{\overline{u}}}
          \def\qbt{{\tilde{\qb}}}
\def\g{\gamma}
\def\G{\Gamma}
\def\d{\delta}
\def\D{\Delta}
\def\l{\lambda}
\def\L{\Lambda}
\def\s{\sigma}
\def\th{\theta}
\def\Delb{{\boldsymbol{\Delta}}}
\def\lamb{{\boldsymbol{\lambda}}}
        \def\thet{{\tilde{\th}}}
\DeclareMathOperator{\Hom}{{\mathrm{Hom}}}
\DeclareMathOperator{\Irr}{{\mathrm{Irr}}}
\DeclareMathOperator{\Ker}{{\mathrm{Ker}}}
\DeclareMathOperator{\rad}{{\mathrm{rad}}}
\DeclareMathOperator{\Rad}{{\mathrm{Rad}}}
\def\to{\rightarrow}
\def\longto{\longrightarrow}
\def\fonction#1#2#3#4#5{\begin{array}{rccc}
{#1} : & {#2} & \longto & {#3} \\
& {#4} & \longmapsto & {#5} 
\end{array}}
\def\fonctio#1#2#3#4{\begin{array}{ccc}
{#1} & \longto & {#2} \\
{#3} & \longmapsto & {#4} 
\end{array}}
\def\vide{\varnothing}
\def\DS{\displaystyle}
\def\SS{\scriptstyle}
\def\SSS{\scriptscriptstyle}
\def\finl{~$\SS \square$}
\def\lexp#1#2{\kern\scriptspace\vphantom{#2}^{#1}\kern-\scriptspace#2}
\def\le{\hspace{0.1em}\mathop{\leqslant}\nolimits\hspace{0.1em}}
\def\ge{\hspace{0.1em}\mathop{\geqslant}\nolimits\hspace{0.1em}}
\mathchardef\inferieur="321E
\mathchardef\superieur="321F
\def\eqna{\begin{eqnarray*}}
\def\endeqna{\end{eqnarray*}}
\def\itemth#1{\item[${\mathrm{(#1)}}$]}
\DeclareMathOperator{\groth}{{\mathrm{\RC}}}
\DeclareMathOperator{\Bip}{{\mathrm{Bip}}}
\def\emptysetb{\boldsymbol{\emptyset}}
\newcommand{\Ue}{{\mathcal{U}_{v}(\widehat{\mathfrak{sl}_e})}}
\newcommand{\Uepr}{{\mathcal{U}_{v}(\widehat{\mathfrak{sl}_e})'}}
\def\pre#1{\hspace{0.1em}\mathop{\leqslant}_{#1}\nolimits\hspace{0.1em}}
\def\cell#1{\hspace{0.1em}\mathop{\sim}_{#1}\nolimits\hspace{0.1em}}
\def\SDT{\SC\DC\TC}
\def\SBT{\SC\BC\TC}
\def\dominance#1{\hspace{0.1em}\mathop{\trianglelefteq}_{#1}\nolimits
\hspace{0.1em}}
\def\dominancepr{\hspace{0.1em}\mathop{\trianglelefteq}_{\infty}^\prime\nolimits
\hspace{0.1em}}
\long\def\@car#1#2\@nil{#1}
\long\def\@first#1#2{#1}
\long\def\@second#1#2{#2}
\long\def\ifempty#1{\expandafter\ifx\@car#1@\@nil @\@empty
  \expandafter\@first\else\expandafter\@second\fi}
\def\ordre{{\SSS{\leqslant}}}
\begin{document}

\baselineskip=16pt

\title{Cellular structures on \\ Hecke algebras of type ${\boldsymbol{B}}$}

\author{C\'edric Bonnaf\'e}
\author{Nicolas Jacon}
\address{\noindent 
Labo. de Math. de Besan\c{c}on (CNRS: UMR 6623), 
Universit\'e de Franche-Comt\'e, 16 Route de Gray, 25030 Besan\c{c}on
Cedex, France} 

\makeatletter
\email{cedric.bonnafe@univ-fcomte.fr, nicolas.jacon@univ-fcomte.fr}

\makeatother

\subjclass{According to the 2000 classification:
Primary 20C08; Secondary 20C05, 05E15}

\date{\today}

\begin{abstract} 
The aim of this paper is to 
gather and (try to) unify several approaches for the modular 
representation theory of Hecke algebras of type $B$. We attempt to explain 
the connections between Geck's cellular structures (coming from Kazhdan-Lusztig 
theory with unequal parameters) and Ariki's 
Theorem on the canonical basis of the Fock spaces. 
\end{abstract}

\maketitle

\pagestyle{myheadings}

\markboth{\sc C. Bonnaf\'e and N. Jacon}{\sc Cellular structures on 
Hecke algebras of type $B$}

\centerline{\it This paper is dedicated to Gus Lehrer, on his sixtieth birthday.}

\tableofcontents

\setcounter{section}{0}

\section*{Introduction}

\medskip

The modular representation theory of Hecke algebras of type $B$ was 
first studied by Dipper-James-Murphy \cite{DJMU}: one of their  essential
tools was to construct a family of modules (called {\it Specht modules}) 
playing the same role as Specht modules in type $A$. Each of these new Specht 
modules have a canonical quotient which is zero or simple: one of the main 
problem raised by this construction is to determine which one are non-zero. 
Later, Graham and Lehrer \cite{GL} 
developed the theory of {\it cellular algebras}, which contains, as a 
particular case, the construction of Dipper-James-Murphy. 
The problem of parametrizing the simple modules and computing 
 the decomposition matrix of Specht modules were then solved by Ariki 
\cite{arikillt} using the canonical basis of Fock spaces of higher level. 
In fact, Ariki's Theorem 
provides different parametrizations of the simple modules of the Hecke 
algebra: only one of them ({\it asymptotic case}) has an interpretation 
in the framework of Dipper-James-Murphy and Graham-Lehrer. 
Recently, Geck showed that the Kazhdan-Lusztig theory with unequal 
parameters should provide a cell datum for each choice of a 
{\it weight function} on the Weyl group (if Lusztig's conjectures (P1)-(P15) 
hold \cite[Conjecture 14.2]{lusztig}). 

Our main aim in this paper is to present an overview of all these results, 
focusing particularly on conjectural connections between Uglov's point of view 
on the Fock space theory and Geck cellular structures. This should 
(if Lusztig's conjectures (P1)-(P15) hold in type $B$) lead to a unified 
approach for a better understanding of the representation theory 
of Hecke algebras. As a by-product, we should get an interpretation 
of {\it all} Ariki's parametrizations of simple modules. 

More precisely, if $Q$ and $q$ are two indeterminates, if $\HC_n$ 
denotes the Hecke $A$-algebra with parameters $Q$ and $q$ 
(here, $A=\ZM[Q,Q^{-1},q,q^{-1}]$), if $\xi$ is a positive irrational 
number (!) and if $r$ denotes the unique natural number such that 
$r \le \xi < r+1$, then Kazhdan-Lusztig theory {\it should} provide a cell 
datum $\CC^\xi = ((\Bip(n),\dominance{r}),\SBT,C^\xi,*)$ where
\begin{itemize}
\item[$\bullet$] $\Bip(n)$ is the set of bipartitions of $n$ and 
$\dominance{r}$ is a partial order on $\Bip(n)$ depending on $r$ 
(see \S\ref{sub: order});

\item[$\bullet$] If $\l \in \Bip(n)$, $\SBT(\l)$ denotes 
the set of standard bitableaux of (bi-)shape $\l$ (filled with $1$,\dots, $n$);

\item[$\bullet$] If $S$ and $T$ are two standard bitableaux of size $n$ 
and of the same shape, $C_{S,T}^\xi$ is an element of $\HC_n$ coming 
from a Kazhdan-Lusztig basis of $\HC_n$ (it heavily depends on $\xi$);

\item[$\bullet$] $* : \HC_n \to \HC_n$ is the $A$-linear anti-involution 
of $\HC_n$ sending the element $T_w$ of the standard basis to $T_{w^{-1}}$;
\end{itemize}
(see \cite[Conjecture C]{BGIL}). If this conjecture holds then, by the 
general theory of cellular algebras, we can associate to each bipartition 
$\l$ of $n$ a {\it Specht module} $S_\l^\xi$ endowed with a bilinear 
form $\phi_\l^\xi$. If $K$ is the field of fractions of $A$, then 
$KS_\l^\xi = K \otimes_A S_\l^\xi$ is the simple $K\HC_n$-module 
associated to $\l$ \cite[Th. 10.1.5]{gecklivre}. Now, if $Q_0$ and $q_0$ 
are two elements of $\CM^\times$ then, through the specialization 
$Q \mapsto Q_0$, $q \mapsto q_0$, we can construct the $\CM\HC_n$-module
$$D_\l^\xi = \CM S_\l^\xi/\Rad(\CM\phi_\l^\xi).$$
By the general theory of cellular algebras, it is known 
that the non-zero $D_\l^\xi$ give a set of representatives of 
simple $\CM\HC_n$-modules. At this stage, it must be noticed that, 
even if $KS_\l^\xi \simeq KS_\l^{\xi'}$ (where $\xi'$ is another positive 
irrational number), it might happen that $S_\l^\xi \not\simeq S_\l^{\xi'}$ 
and $D_\l^\xi \not\simeq D_\l^{\xi'}$ (it is probable that 
$(S_\l^\xi,\phi_\l^\xi) \simeq (S_\l^{\xi'},\phi_\l^{\xi'})$ for all $\lambda\in  \Bip(n)$
if and only if we have also $r \le \xi' < r+1$). 

On the other hand, if we assume further that $q_0^2$ is a primitive 
$e$-th root of unity, if $Q_0^2=-q_0^{2d}$ for some $d \in \ZM$ 
(which is only well-defined modulo $e$), and if $s=(s_0,s_1) \in \ZM^2$ 
is such that $s_0-s_1 \equiv d \mod e$, then Ariki's Theorem 
provides a bijection between the set of Uglov's bipartitions 
$\Bip_e^s(n)$ and the set of simple $\CM\HC_n$-modules. 
Moreover, the decomposition matrix is given by 
$\bigl(d_{\l\mu}^s(1)\bigr)_{\l \in \Bip(n), \mu \in \Bip_e^s(n)}$, where $\bigl(d_{\l\mu}^s(q)\bigr)_{\l,\mu \in \Bip(n)}$ is the 
transition matrix between the standard basis and Uglov-Kashiwara-Lusztig's 
canonical basis of the Fock space (see \S\ref{sub: uglov}). 
The first result of this paper insures that \cite[Conjecture C]{BGIL} 
is ``compatible'' with Ariki's Theorem in the following sense:

\bigskip

\noindent{\bf Theorem.} 
{\it Assume that \cite[Conjecture C]{BGIL} holds and assume that 
$s_0-s_1 \equiv d \mod e$ and $s_0-s_1 \le r < s_0-s_1+e$. Then
$D_\l^\xi\neq 0$ if and only $\l \in \Bip_e^s(n)$ and, 
if $\l \in \Bip(n)$ and $\mu \in \Bip_e^s(n)$, then 
$$[\CM S_\l^\xi:D_\mu^\xi] = d_{\l\mu}^s(1).$$
In particular, we have}
$$[\CM S_\l^\xi:D_\mu^\xi]\neq 0 \Longrightarrow  \l \dominance{r} \mu$$

\bigskip

Note that, in the {\it asymptotic case} (in other words, if $\xi > n-1$), then 
\cite[Conjecture C]{BGIL} holds (see \cite{geck-iancu}) and the cellular 
datum $\CC^\xi$ is more or less equivalent to the one constructed 
by Dipper, James and Mathas (see the work of Geck, Iancu and Pallikaros 
\cite{GIP}). 

\bigskip

One of the problems raised by the previous theorem (in fact, 
essentially by Ariki's Theorem) is the following: 
if $s'=(s_0',s_1') \in \ZM^2$ is such that 
$s_0'-s_1' \equiv s_0-s_1 \mod e$, then the sets $\Bip_e^s(n)$ and 
$\Bip_e^{s'}(n)$ are in bijection. Our second result (see Theorem \ref{bijection})  is to 
construct this bijection by means of an isomorphism between 
the crystals associated to the simple sub-modules $\MC[s]$ and $\MC[s']$ 
of the Fock spaces (see \S \ref{Fock} for the definition of these modules). 
This shows that the ``abstract crystal'' of an irreducible highest 
weight module is canonically associated 
to the representation theory of Hecke algebras of type $B$. By contrast, 
each realization of this crystal is associated with a natural 
parametrization of the simple modules.

Once we accept \cite[Conjecture C]{BGIL}, it is natural 
to ask whether  the matrix $\bigl(d_{\l\mu}^s(q)\bigr)$ can be interpreted as a $q$-decomposition 
matrix using Jantzen's filtration (see our Conjecture C in \S\ref{sub: jantzen}). 
It would also be interesting to see if it should be possible to construct 
different {\it Schur algebras} of type $B$ directly from Kazhdan-Lusztig's theory 
(for the asymptotic case, this construction should give rise to an algebra 
which is Morita equivalent to the two Schur algebras of type $B$ constructed 
by Du and Scott \cite{duscott} and Dipper, James and Mathas \cite{DJMa}). 
If so, it is then natural to ask for a generalization of Varagnolo-Vasserot 
Theorem for Hecke algebras of type $A$ as well as  a generalization 
of Yvonne's Conjecture \cite[Conj. 2.13]{yvonne}. Note that a construction of 
{\it Schur algebras} of type $B$ is provided by the theory of Cherednik 
algebras \cite{ggor}, but this does not provide a {\it generic} Schur algebra. 

If the reader wants more arguments for \cite[Conjecture C]{BGIL}, 
he or she is encouraged to read the original source of this conjecture 
\cite{BGIL}. Note also that recent works by Gordon and Martino 
(see \cite{gordon}, \cite{gordon martino}) 
 on  Cherednik algebras show other 
compatibilities between this conjecture and the geometry of the 
Calogero-Moser spaces, as well as with Baby Verma modules of 
Cherednik algebras {\it at $t=0$}.

Finally, it is natural to ask whether there exist different 
cellular structures for Ariki-Koike algebras associated 
to the complex reflection groups $G(d,1,n)$, these cellular 
structures being indexed by $d$-cores (or by $d$-uples of elements 
of $\ZM$): however, in this case, no Kazhdan-Lusztig's theory is 
available at that time (will there be one in the future?) 
so we have no candidate for the different cellular bases.

This paper is organized as follows. 
In the first section, we present the setting of our problem. 
Then, we recall Ariki's Theorem which allows to compute the decomposition 
matrices for Hecke algebras of type $B_n$ using objects coming from 
quantum group theory. In the third section, we introduce the combinatorial 
notions that we need to describe the Kazhdan-Lusztig theory in type $B_n$. 
The last two sections are devoted to the main results of our paper. Under 
some conjectures, we show the existence of several deep connections 
between Kazhdan-Lusztig theory and the canonical basis theory for $\Ue$.

\newpage

\section{Notation}

\medskip

Let $(W_n,S_n)$ be a Weyl group of type $B_n$ and assume that 
the elements of $S_n$ are denoted by $t$, $s_1$,\dots, $s_{n-1}$ 
in such a way that the Dynkin diagram is given by
\begin{center}
\begin{picture}(220,30)
\put( 40, 10){\circle{10}}
\put( 44,  7){\line(1,0){33}}
\put( 44, 13){\line(1,0){33}}
\put( 81, 10){\circle{10}}
\put( 86, 10){\line(1,0){29}}
\put(120, 10){\circle{10}}
\put(125, 10){\line(1,0){20}}
\put(155,  7){$\cdot$}
\put(165,  7){$\cdot$}
\put(175,  7){$\cdot$}
\put(185, 10){\line(1,0){20}}
\put(210, 10){\circle{10}}
\put( 38, 20){$t$}
\put( 76, 20){$s_1$}
\put(116, 20){$s_2$}
\put(200, 20){$s_{n{-}1}$}
\end{picture}
\end{center}
The length function with respect to $S_n$ will be denoted by 
$\ell : W_n \to \NM=\{0,1,2\dots\}$. 

Let $\G$ be a torsion-free finitely generated 
abelian group and let $A$ denote the group ring 
$\ZM[\G]$. The group law on $\G$ will be denoted additively 
and we shall use an exponential notation for $A$: more precisely, 
$A=\DS{\mathop{\oplus}_{\g \in \G}} \ZM e^\g$, where $e^\g e^{\g'}=e^{\g+\g'}$ 
for all $\g$, $\g' \in \G$. We also fix two elements $a$ and $b$ in $\G$ 
and we denote by $\HC_n$ the Hecke algebra of $W_n$ over $A$ associated 
to the choice of parameters $t \mapsto b$ and $s_i \mapsto a$ 
for $1 \le i \le n-1$ symbolized by the following diagram:
\begin{center}
\begin{picture}(220,42)
\put( 40, 20){\circle{10}}
\put( 44, 17){\line(1,0){33}}
\put( 44, 23){\line(1,0){33}}
\put( 81, 20){\circle{10}}
\put( 86, 20){\line(1,0){29}}
\put(120, 20){\circle{10}}
\put(125, 20){\line(1,0){20}}
\put(155, 17){$\cdot$}
\put(165, 17){$\cdot$}
\put(175, 17){$\cdot$}
\put(185, 20){\line(1,0){20}}
\put(210, 20){\circle{10}}
\put( 38, 02){$b$}
\put( 78, 02){$a$}
\put(118, 02){$a$}
\put(208, 02){$a$}
\put( 38, 32){$t$}
\put( 77, 32){$s_1$}
\put(117, 32){$s_2$}
\put(200, 32){$s_{n{-}1}$}
\end{picture}
\end{center}
More precisely, there exists a basis $(T_w)_{w \in W_n}$ 
of the $A$-module $\HC_n$ such that the multiplication on $\HC_n$ 
is $A$-bilinear and completely determined by the following properties:
$$\begin{cases}
T_w T_{w'} = T_{ww'}, & \text{if $\ell(ww')=\ell(w)+\ell(w')$,} \\
(T_t-e^b)(T_t+e^{-b}) = 0, & \\
(T_{s_i}-e^a)(T_{s_i}+e^{-a})=0,& \text{if $1 \le i \le n-1$.}
\end{cases}$$

All along this paper, we also fix a field $k$ of characteristic $0$ 
and a morphism of groups $\th : \G \to k^\times$: the morphism 
$\th$ extends uniquely to a morphism of rings $\ZM[\G] \to k$ that 
we still denote by $\th$. We then set
$$k\HC_n = k \otimes_A \HC_n$$
and we still denote by $\th$ the specialization morphism $\HC_n \to k\HC_n$. 
Moreover, we assume that the following holds: there exist natural 
numbers $d$ and $e$ such that $e \ge 1$ and 
$$\left\{\begin{array}{l}
\vphantom{\DS{\frac{a}{A}}}
\textrm{\it $\th(a)^2$ is a primitive $e$-th root of unity;}\\
\th(b)^2=-\th(a)^{2d}.\vphantom{\DS{\frac{A}{a}}}
\end{array}\right.$$
If $M$ is an $\HC_n$-module, we denote by $kM$ the $k\HC_n$-module 
$k \otimes_A M$. The Grothendieck group of $k\HC_n$ will be denoted 
by $\groth(k\HC_n)$ and, if $L$ is a $k\HC_n$-module, its class in 
$\groth(k\HC_n)$ will be denoted by $[L]$. Note that 
the algebra $k\HC_n$ is split.

Since $\G$ is torsion-free, the ring $\ZM[\G]$ is integral 
and we denote by $K$ its field of fractions. Then the algebra 
$K\HC_n=K \otimes_A \HC_n$ is split semisimple. Its simple modules are 
parametrized by the set $\Bip(n)$ of bipartitions of $n$: we shall 
denote this bijection by 
$$\fonctio{\Bip(n)}{\Irr K\HC_n}{\l}{V_\l.}$$
This bijection is chosen as in \cite[10.1.2]{gecklivre}.  
We denote by $\Bip$ the set of all bipartitions 
(i.e. $\Bip=\DS{\coprod_{n \ge 0}}\Bip(n)$), the empty partition 
will be denoted by $\emptyset$ and the empty bipartition $(\emptyset,\emptyset)$ 
(which is the unique bipartition in 
$\Bip(0)$) 
will be denoted by $\emptysetb$. Since $A$ is integrally closed 
(it is a Laurent polynomial ring in several algebraically 
independent indeterminates), 
there is a well-defined decomposition map \cite[Theorem 7.4.3]{gecklivre} 
$$\db_n : \groth(K\HC_n) \longto \groth(k\HC_n).$$

\bigskip

\section{Fock space, canonical basis and Ariki's Theorem}

\medskip

The aim of this section is to recall Ariki's Theorem relating 
the canonical basis of the Fock space and the decomposition matrix 
$\db_n$. The main references are 
\cite{arikilivre}, \cite{geckLausanne} and \cite{xavierthese}.  

\bigskip

\subsection{The Fock space}\label{Fock}
Let $v$ be an indeterminate. Let $\hG$ be a free $\ZM$-module 
with basis $(h_0,\dots,h_{e-1},\dG)$ and let $(\L_0,\dots,\L_{e-1},\d)$ 
be its dual basis in $\hG^*=\Hom_\ZM(\hG,\ZM)$. 
The quantum group $\Ue$ is defined as the unital associative 
$\CM(v)$-algebra generated by elements $\{e_i, f_i~|~0 \le i \le e-1\}$ 
and $\{k_h~|~h \in \hG\}$ subject to the relations given 
for example in \cite[chapter 6]{mathaslivre}. 
We denote by $\Uepr$ the subalgebra of $\Ue$ generated by $e_i$, $f_i$, $k_{h_i}$, $k_{h_i}^{-1}$ for $i\in \{0,1,...,e-1\}$.

We fix a pair $s=(s_0,s_1) \in \ZM^2$. To $s$ is associated 
a {\it Fock space} ({\it of level $2$}) $\FG^s$: this is 
an $\Ue$-module defined as follows. 
As a $\CM(v)$-vector space, 
it has a basis given by the symbols $|\l,s\rangle$ where $\l$ 
runs over the set of bipartitions:
$$\FG^s = \mathop{\oplus}_{\l \in \Bip} 
\CM(v)\hskip0.1cm |\l,s\rangle.$$
The action of the generators $\Ue$ is given for instance in 
\cite{uglov}. By \cite{JMMO}, $\FG^s$ is an {\it integrable} 
$\Ue$-module.

If $m \in \ZM$, we denote by $\mba$ the unique element of 
$\{0,1,\dots,e-1\}$ such that $m \equiv \mba \mod e$. We then 
set 
$$\D(s)=\frac{1}{2}\sum_{j=0}^1 \frac{s_j-\sba_j}{e}(s_j+\sba_j-e).$$
Since $|{ \emptysetb},s\rangle$    is a highest
weight vector of    $\FG^s$   
and as $\FG^s$ is an integrable module, 
it follows that the submodule $\Ue  |\emptysetb,s\rangle$  
generated by  $|{\emptysetb},s\rangle$   is an irreducible module. 
We denote it by $ \mathcal{M}[s]$ and it is isomorphic to the 
irreducible $\Ue$-module with highest weight 
$-\Delta (s)\delta+\L_{\sba_0}+\L_{\sba_1}$.  
In addition, the submodule $\Uepr  |\emptysetb,s\rangle$  
generated by  $|\emptysetb,s\rangle$   is also an irreducible 
highest weight module with  weight 
$-\Delta (s)\delta+\Lambda_{\sba_0}+\Lambda_{\sba_1}$. 
We denote it by $\MC[s]'$.

\bigskip

\remark{isomorphisme} 
If $\MC$ and $\NC$ are simple $\Uepr$-modules with highest 
weight $\L$ and $\L'$ respectively, then $\MC \simeq \NC$ if and only 
if $\L \equiv \L' \mod \ZM \d$. Therefore, if $s'=(s_0',s_1') \in \ZM^2$, 
then $\MC[s]' \simeq \MC[s']'$ if and only if 
$(s_0,s_1) \equiv (s_0',s_1 ') \mod e\ZM^2$ or $(s_0,s_1) \equiv (s_1',s_0 ') \mod e\ZM^2$   .\finl

\bigskip

\subsection{Uglov's canonical basis\label{sub: uglov}} 
We shall recall here Uglov's construction of a canonical 
basis of the Fock space, which contains as a particular case 
the Kashiwara-Lusztig canonical basis of the simple $\Ue$-module 
$\MC[s]$ (which is the same as the one of  $\MC[s]'$). The reader may refer 
to Uglov's original paper \cite{uglov} for further details. 

First, recall 
that there is a unique $\CM$-linear involutive automorphism 
of algebra $\overline{~\vphantom{A}} : \Ue \longto \Ue$ 
such that, for all $i \in \{0,1,...,e-1\}$ and $h\in \hG$, we have
$$\overline{v}=v^{-1},\qquad{\overline{k_h}=k_{-h}},
\qquad{\overline{e_i}=e_i,}\qquad{\overline{f_i}=f_i}.$$
It is $\CM(v)$-antilinear (with respect to the restriction of $\overline{?}$ 
to $\CM(v)$). 

One of the main ingredients in Uglov's construction is 
a $\CM(v)$-antilinear involution 
$$\overline{~\vphantom{A}} : \FG^\sb \longto \FG^\sb$$
which is defined using the {\it wedge realization} of the Fock 
space. This involution is $\Ue$-antilinear, that is, for all 
$u \in \Ue$ and $f \in \FG^s$, we have
\equat\label{invo}
\overline{u\cdot f} = \uov \cdot \fov.
\endequat
Moreover, recall that 
\equat\label{invo vide}
\overline{|\emptysetb,s\rangle} = |\emptysetb,s\rangle.
\endequat
Now, if $\l \in \Bip$, there exists 
a unique $G(\l,s) \in \FG^s$ satisfying 
$$\left\{\begin{array}{l}
\vphantom{\DS{\frac{a}{A}}}
\overline{G(\l,s)} = G(\l,s), \\
G(\l,s) \equiv |\l,s\rangle \mod v\CM[v].
\vphantom{\DS{\frac{A}{a}}}
\end{array}\right.$$
The proof of this result is constructive and gives 
an algorithm to compute this canonical basis. 
This algorithm has been improved by Yvonne \cite{xavieralgo}.
Let us write, for $\mu \in \Bip$, 
$$G(\mu,s)=\sum_{\l \in \Bip} d_{\l,\mu}^s(v)\hskip0.1cm
|\l,s\rangle,$$
where $d_{\mu,\mu}^s(v)=1$ and $d_{\l,\mu}^s(v) \in v\CM[v]$ 
if $\l \neq \mu$. 

Finally, recall that there exists 
a unique subset $\Bip_e^s$ of $\Bip$ such that 
\equat\label{canonique}
\text{\it $\bigl(G(\l,s)\bigr)_{\l \in \Bip_e^s}$ coincides 
with Kashiwara-Lusztig canonical basis of $\MC[s]$.} 
\endequat
We set 
$$\Bip_e^s(n)=\Bip_e^s \cap \Bip(n).$$

\bigskip

\subsection{Ariki's Theorem} 
We shall recall here the statement of Ariki's Theorem, using 
the later construction of Uglov:

\bigskip

\noindent{\bf Ariki's Theorem.} 
{\it Assume that $d \equiv s_0-s_1 \mod e$. 
Then there exists a unique bijection
$$\fonctio{\Bip_e^s (n)}{\Irr k\HC_n}{\mu}{D_\mu^s}$$
such that 
$$\db_n[V_\l] = \sum_{\mu \in \Bip_e^\s(n)} d_{\l,\mu}^s(1)\hskip0.1cm 
[D_\mu^s]$$
for all $\l \in \Bip(n)$.}

\bigskip

This theorem, together with the algorithm for computing 
the Kashiwara-Lusztig canonical bases \cite{algo}, provides an efficient tool for computing 
the decomposition map $\db_n$.

\bigskip

\section{Domino insertion}

\medskip

We shall review here some combinatorial results about the {\it domino insertion 
algorithm}: the reader may refer to \cite{lam}, \cite{van} and \cite{white} 
for further details.

\bigskip
 
\subsection{Domino tableaux, bitableaux} 
Let $r\in \NM$. We denote by $\d_r$ the $2$-core $(r,r-1,\dots,1)$, 
with the convention that $\d_0=\emptyset$. Let $\PC$ denote the set of 
all partitions and let $\PC_r$ denote the set of 
partitions with $2$-core $\d_r$. We denote by 
$\PC_r(n)$ the set of partitions of $2$-weight $n$ 
and $2$-core $\d_r$, so that 
$$\PC_r = \coprod_{n \ge 0} \PC_r(n)$$
$$\PC=\coprod_{r \ge 0} \PC_r.\leqno{\text{and}}$$
Note that partitions in $\PC_r(n)$ are partitions of 
$2n+\DS{\frac{r(r-1)}{2}}$. 
Let $\qb : \PC \to \Bip$ 
the map sending a partition to its $2$-quotient. By composition
 with the bijection 
$$ \begin{array}{ccc} 
  \Bip &\longrightarrow& \Bip \\
 (\lambda^0,\lambda^1) & \mapsto & \left\{ \begin{array}{ll} 
(\lambda^0,\lambda^1) & \textrm{ if } r\textrm{ is even}\\
(\lambda^1,\lambda^0)& \textrm{ if } r\textrm{ is odd}
 \end{array}\right.
   \end{array}
   $$
the restriction of $\qb$ 
to $\PC_r$ induces a bijection 
$$\qb_r : \PC_r \stackrel{\sim}{\longrightarrow} \Bip.$$
The restriction of $\qb_r$ to $\PC_r(n)$ induces 
a bijection
$$\PC_r(n) \stackrel{\sim}{\longrightarrow} \Bip(n)$$
that we still denote by $\qb_r$ if no confusion may arise.

Let $\SDT_r(n)$ denote the set of standard domino tableaux 
whose shape lies in $\PC_r(n)$ (and filled with dominoes 
with entries $1$, $2$,\dots, $n$) and let $\SBT(n)$ 
denote the set of standard bitableaux of total size $n$ 
(filled again with boxes with entries $1$, $2$,\dots, $n$). 
We denote by $\qbt_r : \SDT_r(n) \to \SBT(n)$ the bijection 
obtained as a particular case of 
\cite[Theorem 7.3]{CL}. If $\lamb : \SBT(n) \to \Bip(n)$ 
(respectively $\Delb$) sends a bitableau (respectively a domino tableau) 
to its shape, then the diagram
\equat\label{domino n}
\diagram
\SDT_r(n) \rrto^{\DS{\qbt_r}}_{\DS{\sim}}
\ddto_{\DS{\Delb}} && \SBT(n) \ddto^{\DS{\lamb}} \\
&&\\
\PC_r(n) \rrto^{\DS{\qb_r}}_{\DS{\sim}} && \Bip(n)
\enddiagram
\endequat
is commutative.

\bigskip

\subsection{Orders between bipartitions\label{sub: order}} 
The bijection $\qb_r$ allows us to define several partial orders 
on the set of bipartitions. First, let $\trianglelefteq$ denote 
the dominance order on $\PC$. We then define the partial order 
$\dominance{r}$ on $\Bip$ as follows: if $\l$, $\mu \in \Bip$, 
then 
$$\l \dominance{r} \mu \Longleftrightarrow \qb_r^{-1}(\l) \trianglelefteq 
\qb_r^{-1}(\mu).$$

\bigskip

\remark{asymptotique}
If $r\geq n-1$ and $s\geq n-1$, then 
the partial orders $\dominance{r}$ and $\dominance{s}$ coincide 
on $\Bip(n)$. In fact, they coincide with the classical 
{dominance order} on bipartitions that we shall denote 
by $\dominance{\infty}$.\finl

\bigskip

\example{n=2} 
Here are the orders $\dominance{0}$ and 
$\dominance{1}=\dominance{2}=\dots = \dominance{\infty}$ on $\Bip(2)$:
$$(\emptyset;11)~\dominance{0}~(11;\emptyset) ~\dominance{0}~(1;1) ~\dominance{0}~
(\emptyset;2) ~\dominance{0}~(2;\emptyset)$$
$$(\emptyset;11)~\dominance{\infty}~(\emptyset;2) ~\dominance{\infty}~(1;1) ~\dominance{\infty}~(11;\emptyset) ~\dominance{\infty}~(2;\emptyset)$$
\bigskip

\example{n=3} 
Here are the orders $\dominance{0}$, $\dominance{1}$ and 
$\dominance{2}=\dominance{3}=\dots = \dominance{\infty}$ on $\Bip(3)$.
In the diagram, we put an arrow between two bipartitions $\l \longto \mu$ 
if $\mu \dominance{i} \l$ and if there is no other bipartition $\nu$ 
such that $\mu \dominance{i} \nu \dominance{i}\l$.

{\small $$\diagram
& \dominance{0} && \dominance{1} && 
\dominance{\infty} & \\
& (3;\vide) \dto &  & (3;\vide) \dto && (3;\vide) \dto & \\
& (\vide;3) \dto &  & (21;\vide) \dto && (21;\vide) \dlto\drto & \\
& (2;1) \drto\dlto & & (2;1) \dto & (2;1) \drto && (111;\vide) \dlto \\
(1;2) \dto \drrto && (21;\vide) \dto \dllto & (\vide;3) \dto &&  (11;1) \dto & \\
(\vide;21) \drto && (11;1) \dlto & (1;2) \dto & & (1;2) \dlto\drto & \\
& (1;11) \dto & & (11;1) \dto & (1;11) \drto && (\vide;3) \dlto \\
& (111;\vide) \dto & & (111;\vide) \dto & & (\vide;21) \dto & \\
& (\vide;111) & & (1;11) \dto & & (\vide;111) & \\ 
&&& (\vide;21) \dto& \\
&&& (\vide;111) & 
\enddiagram$$}

\bigskip 

\subsection{Domino insertion algorithm} 
Recall that $r \in \NM$ is fixed. 
If $w \in W_n$, then the {\it domino insertion algorithm}, as 
described for instance in \cite{lam}, associates 
to $w$ a standard domino tableau $P_r(w) \in \SDT_r(n)$. 
We set
$$S_r(w)=\qbt_r(P_r(w)).$$
Then $S_r(w)$ is a standard bitableau of total size $n$. 
We also set 
$$Q_r(w)=P_r(w^{-1})\quad\text{and}\quad T_r(w)=S_r(w^{-1})=\qbt_r(Q_r(w)).$$
It turns out that $P_r(w)$ and $Q_r(w)$ have the same shape, 
as well as $S_r(w)$ and $T_r(w)$. Also, if we set 
$$\SBT^{(2)}(n)=\{(S,T) \in \SBT(n) \times \SBT(n)~|~\lamb(S)=\lamb(T)\},$$
then the map
\equat\label{bij}
\fonctio{W_n}{\SBT^{(2)}(n)}{w}{(S_r(w),T_r(w))}
\endequat
is bijective. If $w \in W_n$, then we set
$$\lamb_r(w)=\lamb(S_r(w)) \in \Bip(n).$$
Note that
\equat\label{forme stable}
\lamb_r(w) = \lamb(T_r(w)) = \lamb_r(w^{-1}).
\endequat

\bigskip

\remark{asymptotique bij}
If $r$, $r' \ge n-1$, then the bijection $W_n \to \SBT^{(2)}(n)$, 
$w \mapsto (S_{r'}(w),T_{r'}(w))$ coincides with the bijection 
\ref{bij}. We shall denote it by $W_n \to \SBT^{(2)}(n)$, 
$w \mapsto (S_\infty(w),T_\infty(w))$. Similarly, we shall 
set $\lamb_\infty(w)=\lamb(S_\infty(w))$.\finl

\bigskip

\section{Kazhdan-Lusztig theory}

\medskip

The aim of this section is to show that Kazhdan-Lusztig theory 
{\it with unequal parameters} should 
provide cellular data on $\HC_n$ which are compatible with 
Ariki's Theorem. The reader may refer to \cite{lusztig} for 
the foundations of this theory. 

\medskip

We denote by $\HC_n \to \HC_n$, $h \mapsto \hov$ the unique 
automorphism of ring such that $\overline{e^\g}=e^{-\g}$ and 
$\overline{T}_w=T_{w^{-1}}^{-1}$ for all $\g \in \G$ and $w \in W_n$. 
It is an $A$-antilinear 
involutive automorphism of the ring $\HC_n$. 

\bigskip

\subsection{Kazhdan-Lusztig basis} 
From now on, we fix a total order $\le$ on $\G$ which endows 
$\G$ with a structure of ordered group. 
$$A_{\leqslant 0}=\mathop{\oplus}_{\g \le 0} \ZM e^\g
\quad\text{and}\quad A_{< 0}=\mathop{\oplus}_{\g < 0} \ZM e^\g.$$
Then $A_{\leqslant 0}$ is a subring of $A$ and $A_{<0}$ is an ideal 
of $A_{\leqslant 0}$. We also set
$$\HC_n^{<0}=\mathop{\oplus}_{w \in W_n} A_{<0} T_w.$$
It is a sub-$A_{\leqslant 0}$-module of $\HC_n$. 

If $w \in W_n$, 
then \cite[Theorem 5.2]{lusztig} 
there exists a unique element $C_w^\ordre \in \HC_n$ 
such that 
$$\left\{\begin{array}{l}
\vphantom{\DS{\frac{a}{A}}}
\overline{C_w^\ordre} = C_w^\ordre; \\
C_w^\ordre \equiv T_w \mod \HC_n^{<0}.
\vphantom{\DS{\frac{A}{a}}}
\end{array}\right.$$
Also, $(C_w^\ordre)_{w \in W_n}$ is an $A$-basis of $\HC_n$ and this basis depends on the choice of the order $\ordre$ on $\G$. We define 
the preorders $\pre{\LC,\ordre}$, $\pre{\RC,\ordre}$ and 
$\pre{\LC\RC,\ordre}$ on $W_n$ 
as in \cite[\S 8.1]{lusztig} and we denote by 
$\cell{\LC,\ordre}$, $\cell{\RC,\ordre}$ and $\cell{\LC\RC,\ordre}$ the equivalence 
relations respectively associated to these preorders. 
Here, the exponents or the 
indices $\ordre$ stands for emphasizing the fact that the objects really 
depends on the order $\le$  on $\G$.

\bigskip

\subsection{Bonnaf\'e-Geck-Iancu-Lam conjectures}\label{Nbgil}
From now on, we shall assume that the parameters $a$, $b$ in $\G$ 
are strictly positive and that $b \not\in \{a,2a,\dots, (n-1)a\}$.

\begin{quotation} 
\noindent{\it If $b < (n-1) a$, then we denote by $r$ the unique 
natural number such that $ra < b < (r+1)a$. If $b > (n-1)a$, then 
we may choose for $r$ any value in $\{n-1,n,n+1,\dots\} \cup \{\infty\}$.}
\end{quotation}

Note that the flexibility on the choice of $r$ whenever 
$b > (n-1) a$ will not change the statements below 
(see Remarks \ref{asymptotique} and \ref{asymptotique bij}). 
Note also that, once $a$ and $b$ are fixed elements of $\G$, then 
the number $r$ depends only on the choice of the order $\le$ on $\G$ 
(again with some flexibility if $b > (n-1)a$). 

\bigskip

\begin{quotation}
\noindent{\bf Conjecture A.} 
{\it With the above definition of $r$, we have, for all $w$, $w' \in W_n$:
\begin{itemize}
\itemth{a} $w \cell{\LC,\ordre} w'$ if and only if $T_r(w)=T_r(w')$.

\itemth{b} $w \cell{\RC,\ordre} w'$ if and only if $S_r(w)=S_r(w')$.

\itemth{c} $w \cell{\LC\RC,\ordre} w'$ if and only if 
$\lamb_r(w)  = \lamb_r(w')$.

\itemth{c^+} $w \pre{\LC\RC,\ordre} w'$ if and only if 
$\lamb_r(w)  \dominance{r} \lamb_r(w')$.
\end{itemize}}
\end{quotation}

\bigskip

\remark{blabla}
First, note that the statements (a) and (b) in Conjecture A are 
equivalent. Note also that, if Lusztig's Conjectures {\bf P1-P15} in 
\cite[Chapter 14]{lusztig} hold, then (a) and (b) imply (c). 

Note also that the statements (a), (b) and (c) have been 
proved whenever $b > (n-1)a$ ({\it asymptotic case}: see \cite[Theorem 7.7]{iancu} 
and \cite[3.9]{bilatere}): the statement ($\crm^+$) has been proved only 
if $w$ and $w'$ have the same {\it $t$-length} (i.e. if the numbers 
of occurrences of $t$ in a reduced expression of $w$ and $w'$ are equal), 
see \cite[3.8]{bilatere}. 

Statements (a), (b) and ($\crm^+$) 
have also been proved whenever $a=2b$ or $3a=2b$ (see \cite[Theorem 3.14]{BGIL}).

Finally, we must add that Conjecture $A$ is ``highly probable'': 
it has been checked for $n \le 6$ and is compatible with many 
other properties of Kazhdan-Lusztig cells and other conjectures 
of Lusztig. For a detailed discussion about this, the reader may refer 
to \cite{BGIL}.\finl

\bigskip

{\it From now on, and until the end of this paper, we assume that 
Conjecture A holds.}
We shall now define a basis of $\HC_n$ which depends on $a$, $b$ and $\le$ 
as follows. If $(S,T) \in \SBT^{(2)}(n)$, let $w$ be the unique 
element of $W_n$ such that $S=S_r(w)$ and $T=T_r(w)$: we then 
set
$$C_{S,T}^\ordre = (C_w^\ordre)^\dagger,$$
where $\dagger : \HC_n \to \HC_n$ is the unique $A$-algebra involution 
such that $T_s^\dagger = - T_s^{-1}$ for all $s \in S_n$. 
Now, if $\l \in \Bip(n)$, we denote by $\SBT(\l)$ the set of standard 
bitableaux of shape $\l$ (i.e. $\SBT(\l)=\lamb^{-1}(\l)$). Finally, 
we denote by $\HC_n \to \HC_n$, $h \mapsto h^*$ the unique 
$A$-linear map such that 
$$T_w^*=T_{w^{-1}}$$
for all $w \in W_n$. It is an involutive anti-automorphism 
of the $A$-algebra $\HC_n$. 
Then we have constructed 
a quadruple $((\Bip(n),\dominance{r}),\SBT,C^\ordre,*)$ where
\begin{itemize}
\item[$\bullet$] $(\Bip(n),\dominance{r})$ is a poset;

\item[$\bullet$] For each $\l \in \Bip(n)$, $\SBT(\l)$ is a finite 
set;

\item[$\bullet$] Let $\SBT^{(2)}(n)=\DS{\coprod_{\l \in \Bip(n)}} \SBT(\l) \times \SBT(\l)$ then the map 
$$\fonction{C^\ordre}{\SBT^{(2)}(n)}{\HC_n}{(S,T)}{C_{S,T}^\ordre}$$
is injective and its image is an $A$-basis 
$(C_{S,T}^\ordre)_{(S,T)\in \SBT^{(2)}(n)}$ of $\HC_n$;

\item[$\bullet$] The map $* : \HC_n \to \HC_n$ 
is an $A$-linear involutive anti-automorphism of the ring $\HC_n$ such that 
$(C_{S,T}^\ordre)^*=C_{T,S}^\ordre$ for all $(S,T) \in \SBT^{(2)}(n)$.
\end{itemize}

\bigskip

The next conjecture is taken from \cite[Conjecture C]{BGIL}.

\bigskip

\begin{quotation}
\noindent{\bf Conjecture B.} 
{\it The quadruple $((\Bip(n),\dominance{r}),\SBT,C^\ordre,*)$ is a cell 
datum on $\HC_n$, in the sense of Graham-Lehrer \cite{GL}.}
\end{quotation}

\bigskip

\remark{rem cell}
For a discussion of some evidences for this conjecture, 
see again \cite{BGIL}. Note that if Lusztig's Conjectures 
{\bf P1-P15} in \cite[Chapter 14]{BGIL} hold and if Conjecture A above 
holds, then Conjecture B holds by a recent work of Geck 
\cite{geckcell}.\finl

\bigskip

In the rest of this paper, we shall give some more evidences for 
Conjectures A and B, towards their compatibilities with Ariki's Theorem. 

\bigskip

\begin{quotation}
\noindent{\bf Hypothesis:} 
{\it From now on, and until the end of this paper, we assume that 
Conjectures A and B hold. We shall denote by $\CC^\ordre$ the cell 
datum $((\Bip(n),\dominance{r}),\SBT,C^\ordre,*)$.}
\end{quotation}

\bigskip

\subsection{Interpretation of Ariki's Theorem} 
Since we assume that Conjectures A and B hold, the general 
theory of cellular algebras \cite{GL} shows that the 
cell datum $\CC^\ordre$ provides us with a family 
of Specht modules $S_\l^\ordre$ associated to each bipartition $\l$ of $n$. 
Moreover, $S_\l^\ordre$ is endowed with a bilinear 
form $\phi_\l^\ordre : S_\l^\ordre \times S_\l^\ordre \to A$ such that, 
for all $h \in \HC_n$ 
and $x$, $y \in S_\l^\ordre$, we have
\equat\label{h invariante}
\phi_\l^\ordre(h\cdot x,y)=\phi_\l^\ordre(x,h^* \cdot y).
\endequat
We denote by $k\phi_\l^\ordre$ the specialization of $\phi_\l^\ordre$ 
to $kS_\l^\ordre$ (through $\th : \G \to k^\times$). We then set
$$D_\l^\ordre=kS_\l^\ordre/\rad(k\phi_\l^\ordre).$$
By \ref{h invariante},  $\rad(k\phi_\l^\ordre)$ is a $k\HC_n$-submodule 
of $S_\l^\ordre$, so that $D_\l^\ordre$ is a $k\HC_n$-module. We set 
$$\Bip_{(k)}^\ordre(n)=\{\l \in \Bip(n)~|~D_\l^\ordre \neq 0\}.$$
Then, by the theory of cellular algebras \cite[Thm. 3.4]{GL}, 
$(D_\l^\ordre)_{\l \in \Bip_{(k)}^\ordre(n)}$ is a family of representatives 
of isomorphism classes of irreducible $k\HC_n$-modules.

\bigskip

\remark{dependance}
Note that the $k\HC_n$-modules $D_\l^\ordre$, as well as the 
$\HC_n$-modules $S_\l^\ordre$, depend heavily on the choice 
of the order $\le$ on $\G$ (they depend on $r$): several choices for $\le$ 
will lead to non-isomorphic $k\HC_n$-modules. In particular, the set 
 $\Bip_{(k)}^\ordre(n)$ does depend on $\le$ (see \S \ref{gen}). \finl
\bigskip

Once we accept Conjectures A and B, it is highly probable that the Specht modules satisfy the following property.
\begin{quotation}
\noindent{\bf Conjecture B'.} 
{\it For all $n\in \mathbb{N}$ and for all $\l\in \Bip(n)$, we have
$$KS^{\ordre}_{\l}\simeq V_{\l}$$}
\end{quotation}

\bigskip

Recall that $\th$ is the specialization morphism $\HC_n \to k\HC_n$ such that 
there exist natural 
numbers $d$ and $e$ such that $e \ge 1$ and
$$\left\{\begin{array}{l}
\vphantom{\DS{\frac{a}{A}}}
\textrm{\it $\th(a)^2$ is a primitive $e$-th root of unity;}\\
\th(b)^2=-\th(a)^{2d}.\vphantom{\DS{\frac{A}{a}}}
\end{array}\right.$$

The following result is a generalization of \cite{arikillt} and \cite{gj} (see also \cite{geckcell}).

\begin{theo}\label{main theorem}
Assume that Conjectures A, B and B' hold. Let $\l \in \Bip(n)$.  
Let $p$ be the unique integer such that 
$$d+pe \le r < d+(p+1) e.$$
Put 
$$s=(d+p e,0)$$
then $\Bip_{(k)}^\ordre(n)=\Bip_e^s(n)$ and 
$$[kS_\l^\ordre:D_\mu^\ordre]=d_{\l,\mu}^s(1)$$
for all $\mu \in \Bip_{(k)}^\ordre(n)$. In other words, 
$D_\mu^\ordre\simeq D_\mu^s$ and 
$$\db_n [V_\l] = \sum_{\mu \Bip_{(k)}^\ordre(n)} d_{\l,\mu}^s(1) [D_\mu^\ordre].$$
\end{theo}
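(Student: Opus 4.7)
The plan is to match two unitriangular expansions of $\db_n[V_\l]$ in $\groth(k\HC_n)$: one coming from the cellular datum $\CC^\ordre$ granted by Conjecture B, the other from Ariki's Theorem applied with the multi-charge $s=(d+pe,0)$. Note that this $s$ satisfies $s_0-s_1 = d+pe \equiv d \pmod e$ (so Ariki's hypothesis holds) and $s_0-s_1 \le r < s_0-s_1+e$, which is the crucial arithmetic link between $r$ and $s$.

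First, I would establish $\db_n[V_\l] = [kS_\l^\ordre]$ in $\groth(k\HC_n)$. By Conjecture B', $KS_\l^\ordre \simeq V_\l$; since $S_\l^\ordre$ is an $A$-free $\HC_n$-module built from the cell datum, it is an $A$-lattice in $V_\l$, and the decomposition map of \cite[Theorem 7.4.3]{gecklivre} sends $[V_\l]$ to $[kS_\l^\ordre]$. Graham-Lehrer cellular theory applied to $\CC^\ordre$ then yields
\[
[kS_\l^\ordre] \;=\; \sum_{\mu \in \Bip_{(k)}^\ordre(n)} [kS_\l^\ordre:D_\mu^\ordre]\,[D_\mu^\ordre],
\]
with $[kS_\mu^\ordre:D_\mu^\ordre]=1$ for every $\mu \in \Bip_{(k)}^\ordre(n)$ and the triangularity $[kS_\l^\ordre:D_\mu^\ordre] \ne 0 \Rightarrow \l \dominance{r} \mu$. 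Ariki's Theorem provides a parallel expansion
\[
\db_n[V_\l] \;=\; \sum_{\mu \in \Bip_e^s(n)} d_{\l,\mu}^s(1)\,[D_\mu^s],
\]
with $d_{\mu,\mu}^s(1)=1$ for $\mu \in \Bip_e^s(n)$ and $d_{\l,\mu}^s(v)\in v\CM[v]$ for $\l\ne\mu$.

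The second step is to match the two expansions. Both $\{[D_\mu^\ordre]\}_{\mu\in\Bip_{(k)}^\ordre(n)}$ and $\{[D_\mu^s]\}_{\mu\in\Bip_e^s(n)}$ are $\ZM$-bases of $\groth(k\HC_n)$, so the two indexing sets have the same cardinality. I would then proceed by induction on $\mu$ going downward in $\dominance{r}$: for $\mu$ maximal, both triangular expansions of $[kS_\mu^\ordre]=\db_n[V_\mu]$ collapse to a single term, forcing $\mu \in \Bip_{(k)}^\ordre(n)\cap\Bip_e^s(n)$ and $[D_\mu^\ordre]=[D_\mu^s]$. For the inductive step, if $[D_{\mu'}^\ordre]=[D_{\mu'}^s]$ and $[kS_\l^\ordre:D_{\mu'}^\ordre]=d_{\l,\mu'}^s(1)$ hold for every $\mu'$ strictly above $\mu$, then the ``above $\mu$'' portions of the two expansions of $[kS_\mu^\ordre]$ coincide; subtracting them yields $[kS_\mu^\ordre:D_\mu^\ordre]\,[D_\mu^\ordre]=d_{\mu,\mu}^s(1)\,[D_\mu^s]$, and (using the nonvanishing of $[D_\mu^\ordre]$ for $\mu\in\Bip_{(k)}^\ordre(n)$ and of $[D_\mu^s]$ for $\mu\in\Bip_e^s(n)$) this forces $\mu\in\Bip_{(k)}^\ordre(n) \iff \mu\in\Bip_e^s(n)$ and identifies the two simples. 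Reading off the coefficients of the common basis then gives $[kS_\l^\ordre:D_\mu^\ordre]=d_{\l,\mu}^s(1)$ for all $\l$ and $\mu$.

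The main obstacle is the combinatorial input that makes this induction go through: the triangularity of Uglov's canonical basis at $s=(d+pe,0)$ must be compatible with $\dominance{r}$, i.e. $d_{\l,\mu}^s(v)\ne 0$ with $\l\ne\mu$ must imply $\l\dominance{r}\mu$. The arithmetic role of the choice $d+pe \le r < d+(p+1)e$ is precisely to select the chamber where Uglov's wedge-theoretic dominance order at $s=(d+pe,0)$ agrees with Geck's domino-theoretic order $\dominance{r}$ defined via $\qb_r$. Establishing this order comparison, for instance by translating both orders through the abacus description of $2$-quotients, is the technical heart of the proof. In the asymptotic regime $r \ge n-1$, both orders reduce to the classical dominance on bipartitions and the theorem recovers the parametrizations of \cite{arikillt} and \cite{gj}.
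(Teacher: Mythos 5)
Your proposal is correct and follows essentially the same route as the paper: both compare the $\dominance{r}$-unitriangular cellular expansion of $\db_n[V_\l]$ with Ariki's expansion, and both hinge on the same key input that $d_{\l,\mu}^s(v)\neq 0$ forces $\l \dominance{r}\mu$ when $d+pe\le r<d+(p+1)e$ (which the paper does not prove from scratch either, but obtains ``by the same strategy as'' the proof of Theorem 5.4 of Geck--Jacon). The only cosmetic difference is that the paper replaces your downward induction by a one-step comparison: for $\nu\in\Bip_{(k)}^{\ordre}(n)$, Ariki's theorem supplies $\mu\in\Bip_e^s(n)$ with the identical decomposition column, and unitriangularity together with the Graham--Lehrer property forces $\nu=\mu$.
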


\begin{proof}
Assume that $p$ is the 
unique integer such that 
$$d+pe\le r < d+(p+1) e.$$
and put 
$$s=(d+p e,0)$$
Let $\mu\in  \Bip_e^s(n)$. Then, because of the above condition on $p$, 
we can use the same strategy as in the proof
 of \cite[Theorem 5.4]{gj} to show that:
$$G(\mu,s)=  |\mu,s\rangle +  \sum_{\l \dominance{r} \mu,\ \l\neq \mu } d_{\l,\mu}^s(v)\hskip0.1cm
|\l,s\rangle,$$
Now, assume that $\nu\in  \Bip_{(k)}^\ordre  (n)$. Then by Ariki's Theorem there exists $\mu\in  \Bip_e^s(n)$ such that
 for all $\lambda\in \Bip(n)$:
$$\db_{\lambda,\nu}=d^s_{\lambda,\mu}(1),$$
where $\db_{\l,\nu}=[kS_\l^\ordre:D_\nu^\ordre]$. 
We have :
$$\db_{\mu,\nu}=1\textrm{ and }\db_{\lambda,\nu}=0\textrm{ if }\mu \dominance{r} \lambda\textrm{ and }\mu\neq \l$$
 By the property of cellular algebras \cite[Proposition 3.6]{GL}, 
this implies that $\nu=\mu$. As we have a bijection between 
 $ \Bip_{(k)}^\ordre  (n)$ and $\Bip_e^s(n)$, 
this implies that these two sets are equal. 
The rest of the theorem is now obvious.
\end{proof}

\bigskip

\remark{ariki remark} 
The above theorem gives a conjectural interpretation of Ariki's parametrization 
of simple $k\HC_n$-modules in terms of a new cellular structure on 
$\HC_n$ (coming from Kazhdan-Lusztig's theory).\finl

\bigskip

\subsection{Uglov bipartitions}\label{ug}

The bipartitions in $\Bip_e^s(n)$ are known as Uglov bipartitions. They are constructed by using the crystal graph of the associated Fock space representation (see for example \cite{arikilivre} for details on crystal graphs). 
 In general, we only have a recursive definition for them. However, 
there exist non-recursive characterizations of such bipartitions 
in particular cases:
\begin{itemize}
 \item in the case where $-e\leq d+pe <0$, 
see \cite{FLOTW} and \cite[Proposition 3.1]{Jbijection} 
(the Uglov bipartitions are then called the FLOTW bipartitions);

\item in the case where $d+p e >n-1-e$, see \cite{aj}, \cite{akt} (the Uglov bipartitions are then called the Kleshchev bipartitions). Note that this includes  the ``asymptotic case'', that is, the case where $b>(n-1)a$.

\end{itemize}

\bigskip

\subsection{Jantzen filtration\label{sub: jantzen}} 
Once we believe in Conjectures A and B, it is natural to try 
to develop the theory as in the type A case. For instance, 
one can define a Jantzen filtration on $kS_\l^\ordre$ as follows. 
First, there exists a discrete valuation ring $\OC \subset K$ 
containing $A$ such that, if we denote by $\pG$ the maximal 
ideal of $\OC$, then $\pG \cap A = \Ker \th$. Since $K$ 
is the field of fractions of $A$, the map $\th : A \to k$ extends 
to a map $\thet : \OC \to k$ with kernel $\pG$ and we have 
$$k\HC_n = k \otimes_\OC \OC\HC_n$$
where $\OC\HC_n = \OC \otimes_A \HC_n$. Similarly, if $\l \in \Bip(n)$, 
then $\OC S_\l^\ordre$ is 
an $\OC\HC_n$-module and the extension of scalars defines a bilinear 
form $\OC\phi_\l^\ordre$ on $\OC S_\l^\ordre$. We then set, for 
all $i \ge 0$, 
$$\OC S_\l^\ordre(i)=\{x \in \OC S_\l^\ordre~|~\forall~y \in 
\OC S_\l^\ordre,~\OC\phi^\ordre_\l(x,y) \in \pG^m\}.$$
We then set 
$$kS_\l^\ordre(i) = \bigl(\OC S_\l^\ordre(i) + \pG S_\l^\ordre)/\pG S_\l^\ordre.$$
Then there exists $m_0$ such that $kS_\l^\ordre(m_0)=0$ and the 
$S_\l^\ordre(i)$'s are $k\HC_n$-submodules of $kS_\l^\ordre$. 
Moreover, $kS_\l^\ordre(0)=kS_\l^\ordre$ and 
$kS_\l^\ordre(1)=\rad k\phi_\l^\ordre$. 
The {\it Jantzen filtration} of $kS_\l^\ordre$ is the filtration  
$$0=kS_\l^\ordre(m_0) \subseteq kS_\l^\ordre(m_0-1) \subseteq \cdots 
\subseteq kS_\l^\ordre(1) \subseteq kS_\l^\ordre(0)=kS_\l^\ordre.$$
The next conjecture proposes an interpretation of the polynomials 
$d_{\l,\mu}^s$ as a {\it $v$-decomposition matrix}. This is a generalization of a conjecture by Lascoux, Leclerc and Thibon \cite[\S 9]{llt}.

\bigskip

\begin{quotation}
\noindent{\bf Conjecture C.} 
{\it Let $\l \in \Bip(n)$ and $\mu \in \Bip_{(k)}^\ordre(n)$. 
Let $p$ be the unique integer such that 
$$(d+pe)a< b < (d+(p+1)e)a.$$
Put 
$$s=(d+p e,0),$$ then
$$d_{\l,\mu}^s(v)=\sum_{i \ge 0} ~[kS_\l^\ordre(i)/kS_\l^\ordre(i+1): D_\mu^\ordre]~v^i.$$}
\end{quotation}

\bigskip

It would also be very interesting to find an analogue of 
{\it Jantzen's sum formula}.  This formula could be obtained using the matrix described by Yvonne in \cite[\S 7.4]{xavierthese}.

\bigskip

\subsection{An example: the asymptotic case} 
{\it Assume here, and only in this subsection, that $b > (n-1) a$ (in other words, if $r \ge n-1$)}. Then Theorem \ref{main theorem} 
holds without assuming that Conjectures A and B are true in this case. 
However, note that, at the time this paper is written, the Conjecture 
A is not fully proved. As it is explained in Remark \ref{blabla}, only 
(a), (b) and (c) are proved and part of ($\crm^+$). However, we can define 
an order $\dominancepr$ on $\Bip(n)$ as follows: if $\l$, $\mu \in \Bip(n)$, 
we write $\l \dominancepr \mu$ if $w \pre{\LC\RC,\ordre} w'$ for 
some (or all) $w$ and $w'$ in $W_n$ such that $\lamb_\infty(w)=\l$ and $\lamb_\infty(w')=\mu$. Then, by the work of Geck-Iancu 
\cite{geck-iancu} and Geck \cite{geckcell}, we have 
that $((\Bip(n),\dominancepr),\SBT,C^\ordre,*)$ is a cell datum on 
$\HC_n$ and it is easily checked that the proof of Theorem 
\ref{main theorem} remains valid if we replace everywhere 
$\dominance{r}$ by $\dominancepr$. 

\medskip

Note also that the cell datum $((\Bip(n),\dominancepr),\SBT,C^\ordre,*)$ 
is roughly speaking equivalent to the cell data constructed by 
Graham and Lehrer \cite[Theorem 5.5]{GL} or Dipper, James and Mathas \cite[Theorem 3.26]{DJMa}. 

\bigskip

\section{Generic Hecke algebra}\label{gen}

In the last section, we have shown that, if we assume that 
conjectures $A$ and $B$ hold,  the choice of a pair 
$(a,b) \in \G^2$ and the choice of a total order on $\G$ lead  
to  an ``appropriate'' representation theory for the associated 
Hecke algebra of type $B_n$. In this section, we show that these 
results can be applied to find several ``Specht modules'' theories 
for the {\it same} algebra: we shall work with the group $\G=\ZM^2$, 
with $a=(1,0)$ and $b=(0,1)$ and the main theme of this section will be 
to make the order $\leqslant$ vary.

\medskip

\begin{quotation}
\noindent{\bf Hypotheses and notation:} {\it From now on, we assume that 
$\G=\ZM^2$, $a=(1,0)$ and $b=(0,1)$. We set $Q=e^b$ and $q=e^a$ so that 
$A=\ZM[Q,Q^{-1},q,q^{-1}]$.}
\end{quotation}

In this case, $\HC_n$ is called the {\it generic Hecke algebra} of type $B_n$.
Recall that 
$$(T_t-Q)(T_t+Q^{-1})=(T_{s_i}-q)(T_{s_i}+q^{-1})=0$$
for all $i \in \{1,2,\dots,n-1\}$. Now, any choice 
of elements $Q_0$ and $q_0$ in $k^\times$ defines a unique 
morphism $\th : \G \to k^\times$, $a \mapsto v_0$, $b \mapsto V_0$. 

\medskip

\subsection{Orders on $\mathbb{Z}^2$.} 
Our main theme in this section is to use the possibility of 
endowing $\G=\ZM^2$ with several orders. 
For instance, if $\xi \in \RM_{>0} \setminus \QM$ 
(a positive irrational number), then we can define
$$\text{\it $(m,n) \pre{\xi} (m',n')$ if and only if 
$m+\xi n \le m' + \xi n'$.}$$
This defines a total order on $\ZM^2$. Moreover, $0 <_\xi a$ and $0 <_\xi b$, 
so that all the results of this paper can be applied. If $r$ denotes 
the entire part $[\xi]$ of $\xi$, then
$$ra < b < (r+1) a.$$
For simplification, we shall denote by $S_\l^\xi$ the $\HC_n$-module 
$S_\l^{\pre{\xi}}$ and $D_\l^\xi$ the $k\HC_n$-module $D_\l^{\pre{\xi}}$. 
We also set $\Bip_{(k)}^\xi(n)=\Bip_{(k)}^{\pre{\xi}}(n)$.   We also  assume that the following holds: there exist natural 
numbers $d$ and $e$ such that $e \ge 1$ and 
$$\left\{\begin{array}{l}
\vphantom{\DS{\frac{a}{A}}}
\textrm{\it $\th(a)^2$ is a primitive $e$-th root of unity;}\\
\th(b)^2=-\th(a)^{2d}.\vphantom{\DS{\frac{A}{a}}}
\end{array}\right.$$

We can now restate the Theorem \ref{main theorem} in this case:

\bigskip

\begin{theo}\label{generique}
Assume that Conjectures A, B and B' hold. 
Let $\l\in \Bip(n)$. Let $p$ be the unique integer such that 
$$d+pe < \xi <  d+(p+1) e.$$
Put 
$$s=(d+p e,0)$$
then $\Bip_{(k)}^\xi(n)=\Bip_e^s(n)$ and 
$$[kS_\l^\xi:D_\mu^\xi]=d_{\l,\mu}^s(1)$$
for all $\mu \in \Bip_{(k)}^\xi(n)$. In other words, 
$D_\mu^\xi\simeq D_\mu^s$ and 
$$\db_n [V_\l] = \sum_{\mu \in\Bip_{(k)}^\xi(n)} d_{\l,\mu}^s(1) [D_\mu^\xi].$$
\end{theo}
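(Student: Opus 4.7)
The plan is to derive this as a direct consequence of Theorem~\ref{main theorem}, once the integer $r$ attached to the order $\pre{\xi}$ on $\ZM^2$ is computed in terms of $\xi$.

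First I would check that $\pre{\xi}$ fits into the framework of \S\ref{Nbgil}. Since $\xi>0$, both $a=(1,0)$ and $b=(0,1)$ are strictly positive for $\pre{\xi}$, and $b$ is not a scalar multiple of $a$ in $\ZM^2$, so $b\notin\{a,2a,\dots,(n-1)a\}$. Hence the cellular datum $\CC^{\pre{\xi}}$ is well-defined, Conjectures A, B and B$'$ are in force by the standing hypothesis of Section~4, and Theorem~\ref{main theorem} may be invoked with our fixed specialization $\th$.

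Next I would identify $r$ with $[\xi]$: by definition $r$ is characterized by $ra\pre{\xi} b\pre{\xi} (r+1)a$, which under $\pre{\xi}$ means $r<\xi<r+1$, so $r=[\xi]$. Since $\xi$ is irrational while $d+pe$ and $d+(p+1)e$ are integers, the hypothesis $d+pe<\xi<d+(p+1)e$ of the present statement is equivalent to $d+pe\le r<d+(p+1)e$, which is exactly the assumption on $r$ in Theorem~\ref{main theorem}. With the same value $s=(d+pe,0)$ in both situations, Theorem~\ref{main theorem} delivers at once $\Bip_{(k)}^\xi(n)=\Bip_e^s(n)$, the equality of multiplicities $[kS_\lambda^\xi:D_\mu^\xi]=d_{\lambda,\mu}^s(1)$, and the resulting identity in $\groth(k\HC_n)$.

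The only point that deserves a brief comment is that all the invariants under consideration really depend on $\xi$ only through $r=[\xi]$, so that the notation $S_\lambda^\xi$, $D_\lambda^\xi$ and $\Bip_{(k)}^\xi(n)$ is unambiguous on each open interval of irrationals between two consecutive integers. This is however built into the Kazhdan--Lusztig construction: the basis $(C_w^{\pre{\xi}})$ in the unequal-parameter setting is determined by the sign data of $\pre{\xi}$ restricted to $\ZM a+\ZM b$, and these signs depend only on the position of $\xi$ with respect to the integers. I do not foresee any further obstacle: all the substance already lies in Theorem~\ref{main theorem}, whose proof combines Ariki's theorem, the triangularity of Uglov's canonical basis element $G(\mu,s)$ for the order $\dominance{r}$, and the uniqueness statement \cite[Proposition~3.6]{GL} for cellular algebras.
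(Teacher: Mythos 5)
Your proposal is correct and is exactly the intended argument: the paper offers no separate proof of Theorem \ref{generique}, presenting it as a restatement of Theorem \ref{main theorem} for the order $\pre{\xi}$, and your verification that $r=[\xi]$ satisfies $d+pe\le r<d+(p+1)e$ precisely when $d+pe<\xi<d+(p+1)e$ (together with the check that $a$, $b$ are $\pre{\xi}$-positive and $b\notin\{a,\dots,(n-1)a\}$) is all that is needed. The closing remark on the dependence only on $[\xi]$ is a harmless bonus consistent with the paper's Remark \ref{dependance} and \S\ref{Nbgil}.
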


\bigskip

This Theorem shows that, if Conjectures A and B hold, then 
the different choices of $\xi$ (or of other orders on $\ZM^2$) lead 
to:
\begin{itemize}
\item[$\bullet$] Different cellular structures on $\HC_n$;

\item[$\bullet$] Different families of Specht modules;

\item[$\bullet$] Different parametrizations of the simple $k\HC_n$-modules,
\item[$\bullet$] Different ``$v$-decomposition matrices'' as defined in \ref{sub: jantzen} (despite the fact that the decomposition matrices must be equal up to permutations of the rows and the columns).
\end{itemize}
For instance, if $\xi$ and $\xi'$ are two positive irrational numbers, 
then the $K\HC_n$-modules $KS_\l^\xi$ 
and $KS_\l^{\xi'}$ are isomorphic, but the 
$\HC_n$-modules $S_\l^\xi$ and $S_\l^{\xi'}$ might be non-isomorphic. 

\bigskip

\remark{asymptotic specht}
If $\xi > n-1$, then Geck, Iancu and Pallikaros \cite{GIP} have shown 
that the Specht modules $S_\l^\xi$ are isomorphic to the ones 
constructed by Dipper, James and Murphy \cite{DJMU}.\finl

\subsection{Crystal isomorphisms}
Let $\xi_1$ and $\xi_2$ be irrational numbers. 
Let $r_1 $  and $r_2$  be the natural numbers such that 
$$r_1  < \xi_1 < (r_1+1)\ \ \textrm{ and }\ \ r_2  < \xi_2 < (r_2+1).$$
 Then 
 if we use the order $\pre{\xi_1}$, we obtain a 
 complete set of non-isomorphic simple modules for the specialized algebra $k\HC_n$:
$$\{D_{\lambda}^{\xi_1}\ |\ \lambda\in \Bip^{\xi_1}_{(k)}\}.$$
 On the other hand, if we use the order $\pre{\xi_2}$,  we obtain a 
 complete set of non-isomorphic simple modules for the same algebra:
$$\{D_{\lambda}^{\xi_2}\ |\ \lambda\in \Bip^{\xi_2}_{(k)}\}.$$
By Theorem \ref{generique}, there exist $s^1\in \mathbb{Z}^2$ and $s^2\in \mathbb{Z}^2$ such that:
 $$\Bip_{(k)}^{\xi_1}(n)=\Bip_e^{s^1}(n)\ \ \textrm{ and }\ \ \Bip_{(k)}^{\xi_2}(n)=\Bip_e^{s^2}(n)$$
and we have a bijection
$$c:\Bip_e^{s^1}(n)\to \Bip_e^{s^2}(n)$$
which is uniquely determined by the condition that:
$$\textrm{For all }\mu\in\Bip_e^{s^1}(n),\ D_{\mu}^{\xi_1}\simeq D_{c(\mu)}^{\xi_2}.$$
In this section, we want to explicitly determine the bijection $c$. To do this, we first note that this map induces a bijection between the vertices of the crystal graphs of $\MC[s^1]$ and $\MC[s^2]$. 

The modules $\MC[s^1]'$ and $\MC[s^2]'$ are isomorphic as $\Uepr$ modules. As a consequence, the crystal graphs of $\MC[s^1]$ and $\MC[s^2]$ are also isomorphic. This bijection may be obtained by following a sequence of arrows back to the
 empty bipartition in the crystal graph of   $\MC[s^1]$  and then applying
 the reversed sequence from the empty bipartition of   $\MC[s^2]$. One can also define it as follows. Let 
$$\mathcal{B}_{s^1}:=\{ G(\mu,s^1)\ |\ \mu\in   \Bip_e^{s^1}\}$$
be the elements of the canonical basis of $\MC[s^1]$ and let 
$$\mathcal{B}_{s^2}:=\{ G(\mu,s^2)\ |\ \mu\in   \Bip_e^{s^2} \}$$
be the elements of the canonical basis of $\MC[s^1]$. Then if we specialize the element of $\mathcal{B}_{s^1}$ or 
$\mathcal{B}_{s^2}$ to $v=1$, we obtain a basis of the same irreducible highest weight $\mathcal{U}(\widehat{\mathfrak{sl}_e})$-modules. Hence, for all $\mu\in\Bip_e^{s^1}(n)$, there exists 
$\gamma (\mu)\in \Bip_e^{s^2}(n)$ such that 
$$\textrm{for all $\lambda\in \Bip (n)$, }d^{s^1}_{\lambda,\mu} (1)=d^{s^2}_{\lambda,\gamma (\mu)}(1).$$
Then we have a bijection
$$\gamma:\Bip_e^{s^1}(n)\to \Bip_e^{s^2}(n).$$
This bijection has been explicitly described, in a non-recursive way in \cite{Jbijection} (see also the generalization in \cite{JLbijection}). 
\begin{theo}\label{bijection}
Assume that Conjectures $A$ and $B$ hold. Then for all $\mu\in\Bip_e^{s^1}(n)$, we have
$$c (\mu)=\gamma (\mu).$$
\end{theo}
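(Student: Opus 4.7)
The plan is to show that both bijections $c$ and $\gamma$ are forced to coincide by the fact that they arise from the same identity in the Grothendieck group $\groth(k\HC_n)$.

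First, I would invoke Theorem \ref{generique} for each of the orders $\pre{\xi_1}$ and $\pre{\xi_2}$. Under Conjectures A, B and B', this yields, for $i=1,2$, the decomposition
$$\db_n[V_\l] = \sum_{\mu \in \Bip_e^{s^i}(n)} d^{s^i}_{\l,\mu}(1)\,[D_\mu^{\xi_i}]$$
inside $\groth(k\HC_n)$. Note that by construction $s^1_0 - s^1_1 \equiv s^2_0 - s^2_1 \equiv d \mod e$, so by Remark \ref{isomorphisme} the modules $\MC[s^1]'$ and $\MC[s^2]'$ are isomorphic and the bijection $\gamma$ is well-defined on the nose.

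Next, using the defining property of $c$, namely $[D_\mu^{\xi_1}] = [D_{c(\mu)}^{\xi_2}]$ in $\groth(k\HC_n)$, I would substitute into the $\xi_1$-decomposition of $\db_n[V_\l]$ and equate it with the $\xi_2$-decomposition. Since the classes $\{[D_\nu^{\xi_2}] : \nu \in \Bip_e^{s^2}(n)\}$ are $\ZM$-linearly independent (they are pairwise non-isomorphic simple modules), identifying coefficients produces
$$d^{s^1}_{\l,\mu}(1) = d^{s^2}_{\l, c(\mu)}(1)$$
for every $\l \in \Bip(n)$ and every $\mu \in \Bip_e^{s^1}(n)$. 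This is exactly the relation characterising $\gamma(\mu)$.

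Finally, to conclude $c = \gamma$ I would verify that $\gamma(\mu)$ is uniquely determined by this relation. This follows from the unitriangularity of the canonical basis: for any $\nu \in \Bip_e^{s^2}(n)$ one has $d^{s^2}_{\nu,\nu}(v)=1$ while $d^{s^2}_{\nu,\nu'}(v) \in v\CM[v]$ for $\nu'\neq \nu$ in $\Bip_e^{s^2}(n)$; hence at $v=1$ the columns of $(d^{s^2}_{\l,\nu}(1))$ indexed by $\Bip_e^{s^2}(n)$ are pairwise distinct (each column at $\nu$ has a $1$ in row $\nu$ and $0$ in all other rows indexed by $\Bip_e^{s^2}(n)$). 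Consequently the equation above has at most one solution $c(\mu)$, which must be $\gamma(\mu)$.

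The core of the argument is a formal Grothendieck-group computation once Theorem \ref{generique} is available, so the main obstacle is a bookkeeping one: one has to check that the hypothesis $d + pe \le r < d + (p+1)e$ of Theorem \ref{generique} is met for \emph{both} orders simultaneously, and one has to rely in an essential way on Conjecture B' to translate $[kS_\l^{\xi_i}]$ into $\db_n[V_\l]$. Once these verifications are in place, no further input from crystals or canonical bases (beyond the unitriangularity recalled above) is needed.
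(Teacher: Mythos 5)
Your reduction of the problem to the identity $d^{s^1}_{\l,\mu}(1)=d^{s^2}_{\l,c(\mu)}(1)$ for all $\l\in\Bip(n)$ is exactly the pivot of the paper's own argument, and that part is fine (the hypothesis of Theorem \ref{generique} is met for both orders by the very definition of $s^1$ and $s^2$, so there is nothing extra to check there). The gap is in your uniqueness step. From $d^{s^2}_{\nu,\nu'}(v)\in v\CM[v]$ for $\nu'\neq\nu$ you conclude that $d^{s^2}_{\nu,\nu'}(1)=0$, i.e.\ that each column of the decomposition matrix restricted to the rows indexed by $\Bip_e^{s^2}(n)$ is a standard basis vector. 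This does not follow: a polynomial in $v\CM[v]$ evaluated at $v=1$ is its coefficient sum, which is typically non-zero, and indeed decomposition numbers $d^{s}_{\nu,\nu'}(1)$ with both $\nu,\nu'$ Uglov bipartitions are very often non-zero (already in level one: for $n=3$, $e=2$ one has $[S^{(2,1)}:D^{(3)}]=1$ with both partitions $2$-regular). So the submatrix you describe is not the identity, and your stated reason for the columns being pairwise distinct collapses.

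The columns \emph{are} pairwise distinct, but for a different reason, and this is where the paper's proof genuinely uses the orders $\dominance{r_i}$ rather than only the $v$-adic normalisation. By the argument in the proof of Theorem \ref{main theorem} (the triangularity of $G(\mu,s)$ with respect to $\dominance{r}$, combined with \cite[Proposition 3.6]{GL}), one knows that $d^{s^2}_{\l,\nu}(1)\neq 0$ implies $\l\unlhd_{r_2}\nu$, and $d^{s^2}_{\nu,\nu}(1)=1$. Hence $\nu$ is recovered from its column as the unique $\unlhd_{r_2}$-maximal row index carrying a non-zero entry; the paper applies this characterisation simultaneously to $c(\mu)$ (via the cellular structure for $\pre{\xi_2}$) and to $\gamma(\mu)$ (via the canonical basis), and concludes. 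If you replace your parenthetical justification by this maximality argument, your proof becomes essentially the paper's.
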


\begin{proof}
Let $r_1$ (respectively $r_2$)  denote the entire part of $\xi_1$ (respectively $\xi_2$).  Let $\mu\in\Bip_e^{s^1}(n)$. By Ariki's Theorem and the theory of cellular algebras, $c(\mu)$ is the maximal element  with respect to $\unlhd_{r_2}$ such that  $|c(\mu),s^1 \rangle$ appears with non-zero coefficient  in $G(\mu,s^1)$. 

We have $[\CM S_\l^\xi:D_\mu^{\xi_1}]=[\CM S_\l^\xi:D_{c(\mu)}^{\xi_2}]$ and
   $[\CM S_\l^\xi:D_\mu^{\xi_1}]= d_{\lambda,\mu}^{s^1}(1)=   d_{\lambda,\gamma(\mu)}^{s^2}(1)= [\CM S_\l^\xi:D_{c(\mu)}^{\xi_2}]$. Hence $\gamma (\mu)$ is the maximal element with respect to  $\unlhd_{r_2}$ such that
$$d^{s^1}_{\gamma (\mu),\mu} (1)\neq 0$$
This implies that
$$c (\mu)=\gamma (\mu).$$
This concludes the proof.
\end{proof}

\remark{rem: crystal}
Note that using the non-recursive parametrization of  $\Bip_e^{s}(n)$ known in particular cases (see \S \ref{ug}), it is possible to obtain nice characterizations of  $\Bip_e^{s}(n)$ in all cases by applying the above bijection.

\bigskip

\example{ex: crystal}
Assume that $n=4$ and that $\theta (a)^2 = \theta (b)^2 = -1$. Then $\theta (a)$ is a primitive $2$-root of unity and we have $\theta (b)^2= - \theta (a)^0$. 
If $0<\xi_1 < 2 < \xi_2 < 4$ (i.e. $r_1 \in \{0,1\}$ and $r_2\in \{2,3\}$), 
then we have
$$\operatorname{Irr}(\mathbb{C} \mathcal{H}_4 )=\{D_{\mu}^{\xi_1}\ |\ \mu\in \Bip_2^{(0,0)}(4)\}.$$
$$\operatorname{Irr}(\mathbb{C} \mathcal{H}_4 )=\{D_{\mu}^{\xi_2}\ |\ \mu\in \Bip_2^{(2,0)}(4)\}.$$

To compute $\Bip_2^{(0,0)}(4)$ (respectively $\Bip_2^{(2,0)}(4)$), we compute 
the crystal graph associated to the action of 
the quantum group $\mathcal{U}_v (\widehat{\mathfrak{sl}}_2)$ on the 
Fock space $\mathcal{F}^{(0,0)}$ (respectively $\FC^{(2,0)}$). 
The submodule generated by the empty bipartition gives an irreducible 
highest weight module $\mathcal{M}[0,0]$ (respectively $\MC[2,0]$) 
with highest weight $2\Lambda_0$. 
We only give the part of both crystals containing the bipartitions 
up to rank $4$. They are computed as explained in 
\cite[2.1, 2.2]{Jbijection}:

\begin{center} 
\begin{picture}(500,170)
\put( 50, 140){$(\emptyset,\emptyset)$} 
\put( 50, 100){$(1,\emptyset)$} 
\put(100,  60){$(2,\emptyset)$}
\put(  0,  60){$(1,1)$} 
\put(150,  20){$(3,\emptyset)$}
\put( 50,  20){$(2.1,\emptyset)$}
\put(  0,  20){$(2,1)$}
\put(  0, -15){$(2,2)$}
\put( 50, -15){$(3.1,\emptyset)$}
\put(125,-15){$(3,1)$} 
\put(173,-15){$(4,\emptyset)$}

\put( 12,  15){\vector(0,-1){20}} 
\put( 67,  15){\vector(0,-1){20}}
\put(163,  15){\vector(-1,-1){20}} 
\put(163,  15){\vector(1,-1){20}}
\put( 62, 132){\vector(0,-1){20}} 
\put( 50, 92){\vector(-1,-1){20}} 
\put( 75, 92){\vector(1,-1){20}} 
\put( 12, 52){\vector(0,-1){20}}
\put(130, 52){\vector(1,-1){20}} 
\put(100, 52){\vector(-1,-1){20}} 
 
\put( 52, 120){$0$} 
\put( 72, 78){$1$} 
\put( 49, 78){$0$}  
\put(125, 40){$0$} 
\put( 16, 40){$1$} 
\put(100, 40){$1$} 
\put(177, 3){$1$} 
\put( 16, 2){$0$} 
\put( 70, 2){$0$} 
\put(142, 3){$0$} 

\put(28,165){{\bf Crystal graph of ${\boldsymbol{\MC[0,0]}}$}}


\put(  275, 140){$(\emptyset,\emptyset)$} 
\put( 275, 100){$(1,\emptyset)$} 
\put( 325, 60){$(2,\emptyset)$}
\put( 225, 60){$(1,1)$} 
\put(375,20){$(3,\emptyset)$}
\put( 275, 20){$(2.1,\emptyset)$}
\put( 225, 20){$(2,1)$} 
\put( 225,-15){$(2.1,1)$} 
\put( 275,-15){$(3.1,\emptyset)$} 
\put( 350,-15){$(3,1)$}
\put( 398,-15){$(4,\emptyset)$} 

\put( 287, 132){\vector(0,-1){20}} 
\put( 275, 92){\vector(-1,-1){20}} 
\put( 300, 92){\vector(1,-1){20}} 
\put( 237, 52){\vector(0,-1){20}}
\put( 355, 52){\vector(1,-1){20}} 
\put( 325, 52){\vector(-1,-1){20}} 
\put( 237, 15){\vector(0,-1){20}} 
\put( 292, 15){\vector(0,-1){20}}
\put( 388, 15){\vector(-1,-1){20}} 
\put( 388, 15){\vector(1,-1){20}}

\put(  277, 120){$0$} 
\put(  297, 78){$1$} 
\put(  274, 78){$0$} 
\put(  350, 40){$0$} 
\put(  241, 40){$1$} 
\put(  325, 40){$1$} 
\put(  402, 3){$1$} 
\put(  241, 2){$0$} 
\put(  295, 2){$0$} 
\put(  367, 3){$0$} 

\put(250,165){{\bf Crystal graph of ${\boldsymbol{\MC[2,0]}}$}}

\end{picture} 
\end{center}

\vskip1cm

By Theorem \ref{main theorem}, we then have
\eqna
\operatorname{Irr}(  \mathbb{C} \mathcal{H}_4      )&=&
\{D^{\xi_1}_{(4,\emptyset)}, 
D^{\xi_1}_{(31,\emptyset) }, D^{\xi_1}_{(3,1)}, D^{\xi_1}_{(2,2)}   \}\\
&=&\{D^{\xi_2}_{(4,\emptyset)}, 
D^{\xi_2}_{(31,\emptyset) }, D^{\xi_2}_{(3,1)}, D^{\xi_2}_{(2,2)}   \}
\endeqna
and, by Theorem \ref{bijection}, we have 
\eqna
D^{\xi_1}_{(4,\emptyset)} &\simeq& D^{\xi_2}_{(4,\emptyset)}, \\ 
D^{\xi_1}_{(31,\emptyset) } &\simeq&  D^{\xi_2}_{(31,\emptyset) },\\ 
 D^{\xi_1}_{(3,1)}&\simeq&  D^{\xi_2}_{(3,1) },\\
 D^{\xi_1}_{(2,2)}&\simeq&  D^{\xi_2}_{(2.1,1) }.
\endeqna
\bigskip

\noindent {\bf Acknowledgements.} We would like to thank Bernard Leclerc and Meinolf Geck for many useful discussions on the topic of this paper.  Both authors are supported by the ``Agence national de la recherche" No. JCO7 192339.

\bigskip



\begin{thebibliography}{131}

\bibitem{arikillt}  
\textsc{S.\ Ariki,}
On the decomposition numbers of the Hecke algebra of $G(m, 1, n)$, 
{\it J. Math. Kyoto Univ.} {\bf 36} (1996), 789--808.


\bibitem{arikilivre}  
\textsc{S.\ Ariki,} 
{\it Representations of Quantum algebras and
combinatorics of Young tableaux}, 
University Lecture Series \textbf{26}, Amer.\ Math.\ Soc., Providence, RI, 2002.

\bibitem{aj}
{\sc S.~Ariki and N.~Jacon}, Dipper James Murphy's conjecture for Hecke algebras of type $B_n$, 
to appear in Progress in Mathematics (Birkh\"auser).

\bibitem{akt}
{\sc S.~Ariki, V.~Kreiman and S.Tsuchioka}, 
On the tensor product of two basic representations of $U_v(\widehat{\sG\lG}_e)$, 
 to appear in Adv. Math.

\bibitem{bilatere}
{\sc C. Bonnaf\'e}, 
Two-sided cells in type B (asymptotic case), 
{\it J. Algebra} {\bf 304} (2006), 216--236.

\bibitem{BGIL} 
{\sc C. Bonnaf\'e, M. Geck, L. Iancu and T. Lam}, 
{\it On domino insertion and Kazhdan-Lusztig cells in type $B_n$}, 
to appear in Progress in Mathematics (Birkh\"auser).

\bibitem{iancu} {\sc C. Bonnaf\'e and L. Iancu}, 
Left cells in type $B_n$ with unequal parameters, 
{\it Representation Theory} {\bf 7} (2003), 587--609.

\bibitem{CL}
{\sc C. Carr\'e and B. Leclerc}, 
Splitting the square of a Schur function into its symmetric and 
antisymmetric parts, 
{\it J. Algebraic Combin.} {\bf 4} (1995), 201--231.

\bibitem{DJMa}
{\sc R.~Dipper, G. D.~James and A.~Mathas}, 
Cyclotomic $q$-Schur algebras,
{\it Math. Z.} {\bf 229} (1998), 385--416.

\bibitem{DJMU}
{\sc R. Dipper, G. D. James and G. E. Murphy}, 
Hecke algebras of type $B_n$ at roots of unity, 
{\it Proc. London Math. Soc.} {\bf 70} (1995), 505--528.


\bibitem{duscott}
{\sc J. Du and L. Scott}, 
The $q$-${\rm Schur}\sp 2$ algebra, 
{\it  Trans. Amer. Math. Soc.} {\bf   352}  (2000),  no. 9, 4325--4353.



\bibitem{FLOTW}
\textsc{O. Foda, B. Leclerc, M. Okado, J-Y Thibon and T. Welsh,}
Branching functions of $A\sp {(1)}\sb {n-1}$ 
and Jantzen-Seitz problem for Ariki-Koike   algebras,
{\it Adv. Math.} \textbf{141} no. 2 (1999), 322--365.

\bibitem{ggor}
\textsc{V. Ginzburg, N. Guay, E. Opdam and R. Rouquier},
 On the category $\mathcal{O}$ for rational Cherednik algebras,  
{\it  Invent. Math.}  154  (2003),  no. 3, 617--651.



\bibitem{geckLausanne}  
\textsc{M.\ Geck,} 
Modular representations of Hecke algebras, 
In: {\it Group representation theory} (EPFL, 2005; eds. M. Geck, D.
Testerman and J. Th\'{e}venaz), EPFL Press (2007), p. 301-353.



\bibitem{geckcell}  
\textsc{M.\ Geck,} 
Hecke algebras of finite type are cellular,
{\it Invent Math.} {\bf 169} (2007), 501--517.



\bibitem{geck-iancu}
{\sc M. Geck and L. Iancu}, 
Lusztig's $\ab$-function in type $B_n$ in the asymptotic case, 
{\it Nagoya Math. J.} {\bf 182} (2006), 199--240.

\bibitem{gj}
{\sc M.~Geck and N.~Jacon}, 
Canonical basic sets in type $B_n$,  
{\it J. Algebra} {\bf 306}  (2006), 104--127.


\bibitem{GIP} 
{\sc M. Geck, L. Iancu and C. Pallikaros}, 
Specht modules and Kazhdan--Lusztig cells in type $B_n$, 
to appear in {\it J. Pure and Applied Algebra}, Volume 212, Issue 6, (2008), 1310--1320.

\bibitem{gecklivre}
{\sc M.~Geck and G.~Pfeiffer}, 
{\it Characters of finite Coxeter groups and
Iwahori--Hecke algebras}, London Math. Soc. Monographs, New Series {\bf 21},
Oxford University Press, New York 2000. xvi+446 pp.

\bibitem{GL}
{\sc J.J.~Graham and G.I.~Lehrer},
 Cellular algebras, 
{\it Invent. Math.} {\bf 123} (1996), 1--34.


\bibitem{gordon}
{\sc I. Gordon}, 
Quiver varieties, category $\mathcal{O}$ for rational Cherednik algebras, and Hecke algebras, 
preprint, available at  \url{http://arxiv.org/abs/math/0703150}.


\bibitem{gordon martino}
{\sc I. Gordon and M. Martino}
Calogero-Moser space, reduced rational Cherednik algebras, and two-sided cells,
preprint, available at  \url{http://arxiv.org/abs/math/0703153}



\bibitem{algo}  \textsc{N.\ Jacon,} 
An algorithm for the computation of the 
decomposition matrices for Ariki-Koike algebras,
{\it J. Algebra} {\bf 292}  (2005), 100--109.


\bibitem{Jbijection}  \textsc{N.\ Jacon,} Crystal graphs of irreducible highest weight $\Ue$-modules of level two and Uglov bipartitions, 
 {\it J. Algebr Comb.} {\bf 27}  (2008), 143--162.


\bibitem{JLbijection}  
\textsc{N.\ Jacon and C.\ Lecouvey}  
Crystal isomorphisms for irreducible highest weight $\Ue$-modules of higher level, 
preprint (2007) available at \url{http://arxiv.org/abs/0706.0680}.




\bibitem{JMMO}
{\sc M.~Jimbo, K.~C.~Misra, T.~Miwa and M.~Okado}, 
Combinatorics of representations of $U_q(\hat{\mathfrak{sl}}(n))$ at $q=0$, 
{\it Comm. Math. Phys.} {\bf 136} (1991), 543--566.





\bibitem{lam} {\sc T. Lam}, 
Growth diagrams, domino insertion, and sign-imbalance, 
{\it Journal of Combinatorial Theory} {\bf 107} (2004), 87--115.

\bibitem{van} {\sc M. vanLeeuwen}, 
The Robinson-Schensted and Schutzenberger algorithms, an elementary approach, 
The Foata Festschrift, {\it Electron. J. Combin.} {\bf 3} (1996), 
Research Paper 15.

\bibitem{llt} 
{\sc A. Lascoux, B. Leclerc and J-Y Thibon},
Hecke algebras at roots of unity and crystal bases of quantum affine algebras.
{\it Comm. Math. Phys.} {\bf 181} (1996), 205-263.

\bibitem{lusztig}
{\sc G. Lusztig}, 
{\it Hecke algebras with unequal parameters}, 
CRM Monographs Ser. {\bf 18}, Amer. Math. Soc., Providence, RI, 2003.


\bibitem{mathaslivre}  
\textsc{A.\ Mathas}, 
{\it Iwahori-Hecke algebras and Schur
algebras of the symmetric group}, 
University Lectures Series, AMS, Providence, \textbf{15}, 1999.

\bibitem{white}
{\sc M. Shimozono and D. E. White}, 
Color-to-spin ribbon Schensted algorithms,
{\it Discrete Math.} {\bf 246} (2002), 295--316.

\bibitem{uglov} 
\textsc{D.\ Uglov},
Canonical bases of higher-level $q$-deformed Fock spaces and 
Kazhdan-Lusztig polynomials,
Kashiwara, Masaki (ed.) et al., Boston: Birkh\"auser. 
{\it Prog. Math.} \textbf{191}  (2000): 249--299.




\bibitem{yvonne} \textsc{X.\ Yvonne},
 A conjecture for $q$-decomposition matrices of cyclotomic $v$-Schur algebras. 
{\it  J. Algebra}  304  (2006),  no. 1, 419--456.

\bibitem{xavierthese} \textsc{X.\ Yvonne},
{\it Base canonique d'espaces de Fock de niveau sup\'erieur}, 
Ph.D. Thesis, Universit\'e de Caen, 
available at \url{http://tel.archives-ouvertes.fr/tel-00137705}.

\bibitem{xavieralgo} \textsc{X.\ Yvonne},
An algorithm for computing the canonical bases of 
higher-level $q$-deformed Fock spaces,
{\it J. Algebra} \textbf{309} (2007), 760--785.


\end{thebibliography}
\end{document}

\vspace{0,5cm}

\begin{center} 
\begin{picture}(250,150)
\put(  125, 140){$(\emptyset,\emptyset)$} 
\put( 125, 100){$(1,\emptyset)$} 
\put( 175, 60){$(2,\emptyset)$}
\put( 75, 60){$(1,1)$} 
\put(225,20){$(3,\emptyset)$}
\put( 125, 20){$(2.1,\emptyset)$}
\put( 75, 20){$(2,1)$}
\put( 87, 15){\vector(0,-1){20}}\put( 75,-15){$(2,2)$} 
\put( 142, 15){\vector(0,-1){20}}\put( 125,-15){$(3.1,\emptyset)$} 
\put( 238, 15){\vector(-1,-1){20}}\put( 200,-15){$(3,1)$} 
\put( 238, 15){\vector(1,-1){20}}\put( 248,-15){$(4,\emptyset)$}

\put( 137, 132){\vector(0,-1){20}} 
\put( 125, 92){\vector(-1,-1){20}} \put( 150, 92){\vector(1,-1){20}} 
\put( 87, 52){\vector(0,-1){20}}
\put( 205, 52){\vector(1,-1){20}} 
\put( 175, 52){\vector(-1,-1){20}} 
 
\put(  127, 120){$0$} 
\put(  147, 78){$1$} 
\put(  124, 78){$0$}  
\put(  200, 40){$0$} 
\put(  91, 40){$1$} 
\put(  175, 40){$1$} 
\put(  252, 3){$1$} 
\put(  91, 2){$0$} 
\put(  145, 2){$0$} 
\put(  217, 3){$0$} 
\end{picture} 
\end{center}

 \vspace{0,8cm}

\begin{center} 
\begin{picture}(250,150)
\put(  125, 140){$(\emptyset,\emptyset)$} 
\put( 125, 100){$(1,\emptyset)$} 
\put( 175, 60){$(2,\emptyset)$}
\put( 75, 60){$(1,1)$} 
\put(225,20){$(3,\emptyset)$}
\put( 125, 20){$(2.1,\emptyset)$}
\put( 75, 20){$(2,1)$} 
\put( 75,-15){$(2.1,1)$} 
\put( 125,-15){$(3.1,\emptyset)$} 
\put( 200,-15){$(3,1)$}
\put( 248,-15){$(4,\emptyset)$} 

\put( 137, 132){\vector(0,-1){20}} 
\put( 125, 92){\vector(-1,-1){20}} 
\put( 150, 92){\vector(1,-1){20}} 
\put( 87, 52){\vector(0,-1){20}}
\put( 205, 52){\vector(1,-1){20}} 
\put( 175, 52){\vector(-1,-1){20}} 
\put( 87, 15){\vector(0,-1){20}} 
\put( 142, 15){\vector(0,-1){20}}
\put( 238, 15){\vector(-1,-1){20}} 
\put( 238, 15){\vector(1,-1){20}}

\put(  127, 120){$0$} 
\put(  147, 78){$1$} 
\put(  124, 78){$0$} 
\put(  200, 40){$0$} 
\put(  91, 40){$1$} 
\put(  175, 40){$1$} 
\put(  252, 3){$1$} 
\put(  91, 2){$0$} 
\put(  145, 2){$0$} 
\put(  217, 3){$0$} 
\end{picture} 
\end{center}

 \vspace{0,8cm}